\theoremstyle{plain}
\newtheorem{teorema}{Teorema}[section]
\newtheorem*{teoremaint}{Teorema principal}
\newtheorem{proposicion}[teorema]{Proposici\'on}
\newtheorem{lema}[teorema]{Lema}
\newtheorem{observacion}[teorema]{Observaci\'on}
\newtheorem{pregunta}{Pregunta}
\theoremstyle{definition}
\newtheorem{definicion}[teorema]{Definici\'on}
\newcommand{\subjclass}[2][2020]{%
  \let\@oldtitle\@title%
  \gdef\@title{\@oldtitle\footnotetext{#1 \emph{Mathematics subject classification.} #2}}%
}
\newcommand{\keywords}[1]{%
  \let\@@oldtitle\@title%
  \gdef\@title{\@@oldtitle\footnotetext{\emph{Palabras clave.} #1.}}%
}
\title{Puntajes m\'aximos en el juego de domin\'o}
\author{
Carlos E. Parra$^{1}$, Eugenio Trucco$^{2}$\\ \\
\small{$^{1,2}$Instituto de Ciencias F\'isicas y Matem\'aticas,}\\
\small{Universidad Austral de Chile.}\\
\small{\texttt{$^{1}$carlos.parra@uach.cl, $^{2}$etrucco@uach.cl}}
}
\date{}
\keywords{Domin\'o, teor\'ia de juegos, trancas m\'inimas, optimizaci\'on, divulgaci\'on matem\'atica}
\subjclass{00A08,91-10}
\begin{document}
\maketitle

\abstract{En este trabajo estudiamos los puntajes m\'aximos que se pueden obtener en una partida, por equipos, en el juego del domin\'o. Concretamente; nosotros mostramos que, si la partida termina porque el juego esta trancado, entonces el puntaje m\'aximo que se puede obtener bajo este supuesto es de 107.} 


\section{Introducci\'on}
El domin\'o es un juego de mesa que consta de 28 fichas rectangulares, con dos extremos (cuadrados) cada una, los que pueden contener un espacio en blanco o una cantidad de puntos, que van desde 1 hasta 6. M\'as a\'un; por cada espacio en blanco o cantidad de puntos, existen 7 fichas diferentes, donde uno de los extremos es el señalado.

Por otra parte, existen muchas formas de jugar el domin\'o que pueden variar dependiendo del n\'umero de jugadores, n\'umero de fichas de cada jugador, n\'umero de puntos para ganar el juego, entre otros. A groso modo; el juego de domin\'o, consiste de una o varias partidas, con el objetivo de conseguir, al menos, la cantidad de puntos necesarios para ganar el juego. Cada partida se inicia\footnote{Si es la primera partida del juego, se suele iniciar con la ficha que tiene 6 puntos en cada uno de sus extremos. En el resto de las partidas, los inicios se alternan consecutivamente entre todos los jugadores.} colocando una ficha en la mesa, y se prosigue consecutivamente los turnos de cada jugador, colocando fichas que permitan concatenar algunos de los extremos de la partida o pasando (turno sin jugar), en el caso que no tenga fichas para realizar lo anterior. Una partida termina cuando un jugador ha colocado todas sus fichas en la mesa o cuando no es posible que uno de los jugadores, coloque una ficha (est\'a configuraci\'on se le conoce como una {\bf tranca}). En cualquiera de los dos casos, se le asignan los puntos\footnote{Si la partida finaliza porque un jugador jug\'o su última ficha, entonces se asigna la suma de los puntos entre los jugadores del otro equipo (respectivamente, de los otros equipos). Si la partida finaliza en tranca, entonces se asigna los puntos del equipo con m\'as puntos (respectivamente, la suma de los puntos del resto de los equipos) o no se asignan puntos, en caso de que los equipos tengan la misma cantidad de puntos.} a el equipo\footnote{Si cada jugador juega individualmente, entenderemos que cada equipo est\'a formado por una sola persona.} del jugador que coloco su \'ultima ficha o al equipo que tiene menos puntos, seg\'un corresponda.

En este trabajo, nos enfocaremos en el juego del domin\'o en el que participan 4 personas, que se agrupan en parejas y se enfrentan entre s\'i (ver \cite{reglasdomino}). Adem\'as, asumiremos que un equipo es el ganador del juego (y, por tanto, el juego termina) cuando este suma, al menos, 100 puntos. Esta versi\'on del domin\'o es la que usualmente se juega en centroam\'erica y, para casi todas las personas que jugaron esta versi\'on, han necesitado m\'as de una partida para poder ganar el juego. Por tanto, surge de manera natural preguntarse lo siguiente:
\begin{pregunta}
	 ?`Es posible ganar el juego del domin\'o con una sola partida?
\end{pregunta}
En el a\~no 1965, se desarroll\'o una partida, conocida como la \emph{partida inmortal} (ver en la siguiente secci\'on), en la que un equipo gana una partida de domin\'o y obtiene 111 puntos (ver \cite{dominosinbarreras}). Por tanto, la respuesta a la pregunta anterior es positiva.

Ahora, nosotros nos planteamos la siguiente pregunta que tiene inter\'es dentro de las \emph{Matem\'aticas}: 
\begin{pregunta}
	?`Cu\'al es el puntaje m\'aximo que puede obtener un equipo en una partida de domin\'o?
\end{pregunta}

En la literatura matem\'atica encontramos el un trabajo de Reiss (ver \cite{contarpartidas}) en el que se cuenta el n\'umero de posibles partidas en el domin\'o, concretamente existen $7\,\,959\,\,229\,\,931\,\,520$ partidas.

A continuaci\'on, exhibimos nuestro resultado principal en esta direcci\'on, que est\'a enmarcado dentro de las partidas que terminan en una tranca.

\begin{teoremaint}
El mayor puntaje que puede obtener un equipo, en una partida de domin\'o que termina en una tranca, es de 107 puntos.
\end{teoremaint}

Por \'ultimo, nosotros en este trabajo no respondemos la pregunta 2 en su totalidad. Por tanto, dicha pregunta a\'un sigue sin respuesta. Sin embargo, sospechamos que la respuesta es de 111 puntos. 

\section{Notaci\'on}
En esta secci\'on, introduciremos las notaciones que nos permitir\'an dar una demostraci\'on al Teorema principal de este trabajo (ver Teorema~\ref{teo:107}). Para ello, empezaremos con la siguiente representaci\'on de las fichas del domin\'o.
\subsection{Las fichas}\label{sub-fichas}
Cada ficha del domin\'o, la representaremos por pares de la forma $[a,b]$, donde $a$ y $b$ son n\'umeros enteros del 0 al 6. Adem\'as, en esta representaci\'on asumiremos que la ficha $[a,b]$, coincide con la ficha $[b,a]$.  Por tanto; todas las fichas del domin\'o, las podemos visualizar a continuaci\'on: 
\[
\begin{matrix}
[0,6]&[1,6]&[2,6]&[3,6]&[4,6]&[5,6]&[6,6]\\
[0,5]&[1,5]&[2,5]&[3,5]&[4,5]&[5,5]&\\
[0,4]&[1,4]&[2,4]&[3,4]&[4,4]& &\\
[0,3]&[1,3]&[2,3]&[3,3]& & &\\
[0,2]&[1,2]&[2,2]& & & &\\
[0,1]&[1,1]& & & & &\\
[0,0]& & & & & &\\
\end{matrix}
\]

\subsection{Representaci\'on de una partida}
A continuaci\'on, utilizaremos la representaci\'on de las fichas del domin\'o, para ilustrar la partida inmortal del domin\'o (ver \cite{dominosinbarreras}). Concretamente, los jugadores al principio de la partida, tendr\'an en sus manos las siguientes fichas: 
\begin{enumerate}
	\item[] Jugador 1: $[1,1]$, $[4,0]$, $[3,1]$, $[0,5]$, $[0,0]$, $[0,2]$ y $[0,6]$.
	\item[] Jugador 2: $[5,5]$, $[6,5]$, $[5,4]$, $[5,3]$, $[6,3]$, $[4,3]$ y $[3,3]$.
	\item[] Jugador 3: $[1,0]$, $[0,3]$, $[5,1]$, $[2,5]$, $[2,3]$, $[2,2]$ y $[1,2]$.
	\item[] Jugador 4: $[1,4]$, $[2,4]$, $[4,4]$, $[4,6]$, $[2,6]$, $[1,6]$ y $[6,6]$.   
\end{enumerate}
Ahora, representaremos la partida inmortal a trav\'es de la siguiente tabla:  

\begin{center}
\begin{tabular}{|c|c|c|c|c|c|c|c|}
\hline &Turno 1&Turno 2&Turno 3&Turno 4&Turno 5&Turno 6&Turno 7 \\ \hline
Jugador 1&[1,1]&[4,0]&[3,1]&[0,5]&[0,0]&[0,2]&[0,6]  \\ \hline
Jugador 2&pasa &pasa&pasa&pasa&pasa&pasa&\\ \hline
Jugador 3&[1,0]&[0,3]&[5,1]&pasa&pasa&[2,5]&\\ \hline
Jugador 4&[1,4]&pasa&pasa&pasa&pasa&pasa&\\ \hline
\end{tabular}
\end{center}

Para entender la partida que representa la tabla, debemos considerar lo siguiente:
\begin{enumerate}
	\item Los jugadores 1 y 3 forman un equipo, mientras que los jugadores 2 y 4 forman el otro equipo. 
	\item A partir de la segunda columna, se ilustra cada una de las jugadas que realiza cada jugador en la partida. De hecho, el jugador 1 inicia la partida con la ficha $[1,1]$. 
	\item El jugador 2, en su primer turno pasa.
	\item El jugador 3, en su primer turno juega la ficha $[1,0]$. Por tanto, la mesa de la partida queda de la siguiente manera: $$[1,1][1,0]$$
	\item El jugador 4, puede jugar en su primer turno una ficha que tenga un 0 o un 1. Él decide jugar la ficha $[1,4]$ y as\'i, la mesa de la partida queda de la siguiente forma: $$[4,1][1,1][1,0]$$
	\item El jugador 1, juega ahora su segunda ficha en su segundo turno y, el resto de los turnos de cada jugador, sigue este patr\'on hasta que el jugador 1 termina la partida, jugando su \'ultima ficha.  
\end{enumerate}

La tabla en s\'i, representa lo que ocurre al nivel de la mesa en la partida de domin\'o. En este caso, al finalizar la partida el jugador 2 se quedo con las fichas $[5,5]$, $[6,5]$, $[5,4]$, $[5,3]$, $[6,3]$, $[4,3]$ y $[3,3]$. Mientras que el jugador 3, se quedo con las fichas $[2,3]$, $[2,2]$ y $[1,2]$, y el jugador 4 con las fichas $[2,4]$, $[4,4]$, $[4,6]$, $[2,6]$, $[1,6]$ y $[6,6]$. Si se suman los puntos de las fichas que tienen los jugadores 2 y 4 al finalizar la partida, se obtiene 111, lo que le da la victoria de la partida y el juego al equipo formado por los jugadores 1 y 3.

\subsection{Partidas que finalizan en una tranca}
En esta subsecci\'on, introduciremos algunas nociones asociadas a las partidas que finalicen en una tranca. Dichas nociones son claves para la demostraci\'on del Teorema~\ref{teo:107}. Empezaremos resaltando el siguiente hecho
\begin{proposicion}\label{prop:trancaiguales}
Si una partida termina en una tranca, los extremos del tablero coinciden.
\end{proposicion}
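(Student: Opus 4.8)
The plan is to run a parity argument on the number of times a fixed value shows up on the board. First I would name the values of the two exposed ends of the chain, say $p$ and $q$, and aim to prove $p=q$. The decisive feature of a tranca is that \emph{every} domino containing the value $p$ must already be on the table: if some player still held a tile $[p,j]$, they could legally place it on the end $p$, contradicting that the game is blocked. The same reasoning applies to $q$. Hence, for each end value, all seven tiles carrying that value have been played, and none remain in the players' hands.

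Next I would record the one arithmetic fact that drives everything. For an arbitrary value $k\in\{0,1,\dots,6\}$, the six tiles $[k,j]$ with $j\neq k$ each contribute a single $k$, while the double $[k,k]$ contributes two; so the value $k$ occurs $6+2=8$ times in total across the whole set of $28$ dominoes — an even number. I would then connect this count to the ends through the structure of the chain: since the placed tiles form a single line, every half of every placed tile is either glued to an equal half at an interior junction or left exposed at one of the two ends. Writing $N_k$ for the number of occurrences of $k$ among the \emph{placed} tiles, this gives $N_k = 2J_k + e_k$, where $J_k$ counts interior junctions showing $k$ and $e_k\in\{0,1,2\}$ counts exposed ends showing $k$; in particular $N_k\equiv e_k\pmod 2$.

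Applying this with $k=p$ finishes the argument. Because all tiles containing $p$ are on the table, $N_p=8$, which is even, so $e_p$ is even. Since $p$ is by definition the value of one end we have $e_p\geq 1$, and since there are only two ends, $e_p\leq 2$; therefore $e_p=2$. Both exposed ends show $p$, so $q=p$, which is exactly the claim.

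The step I expect to be delicate is not the counting but justifying the identity $N_k = 2J_k + e_k$ rigorously: one must confirm that the placed tiles really do form a single chain with exactly two exposed ends, and handle degenerate configurations such as a board consisting of a single tile or a double sitting at an end. A clean way to package this is graph-theoretic: regard each value as a vertex and each placed domino as an edge (a double as a loop), so that the laid-out line is an Eulerian trail of the placed subgraph. The endpoints of an open trail are the unique vertices of odd degree, and here $\deg(p)=N_p=8$ is even, which forces the trail to be closed and again yields $p=q$.
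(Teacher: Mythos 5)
Your proposal is correct and takes essentially the same approach as the paper: both proofs rest on the parity observation that in a tranca no player can hold a tile carrying the end value $p$, so all eight occurrences of $p$ (an even number) lie on the table, while concatenation forces interior occurrences to pair up, leaving an even number of exposed ends showing $p$. Your packaging via the identity $N_k = 2J_k + e_k$ (equivalently, the Eulerian-trail formulation) is in fact slightly cleaner than the paper's, since it argues directly rather than by contradiction and absorbs the paper's separate case analysis for the double $[0,0]$ sitting at an end.
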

\begin{proof}
Si la partida esta trancada, supongamos sin p\'erdida de generalidad que los extremos del tablero son 0 y 6. Adem\'as, supongamos que la ficha $[0,k]$ con $k\ne 0$ est\'a en un extremo del tablero. Ahora, usando el hecho de que las fichas de la partida van concatenadas, deducimos que hay un n\'umero par de 0 en la mesa, si quitamos la ficha $[0,k]$. Como no hay mas 0 en el resto de las fichas de los jugadores, se deduce entonces que en las fichas del domin\'o, hay un n\'umero impar de 0 (pues al n\'umero par de 0 mencionados anteriormente, le debemos sumar un 0 de la ficha $[0,k]$). Lo cual es absurdo, pues en total hay 8 (n\'umero par). 

\vspace{0,3 cm}

Ahora, debemos estudiar el caso en que la ficha $[0,0]$ est\'a en el extremo respectivo. En este caso, esta ficha esta conectada a una de la forma $[0,k]$ con $k\ne 0.$ Por un argumento similar al anterior, se deduce que en la mesa hay un n\'umero par de 0, si quitamos las fichas $[0,0]$ y $[k,0]$. Por ende, tambi\'en llegaremos a una contradicci\'on.
\end{proof}

En virtud de lo anterior, para una partida que finaliza en una tranca, diremos que dicha partida est\'a {\bf trancada por el n\'umero $\mathbf{k}$},  si este es el n\'umero que aparece en los extremos del tablero al finalizar la partida.

\begin{proposicion}
Las siguientes se cumplen, en una partida que finaliza en una tranca:
\begin{enumerate}
	\item[a)] 10 es la menor cantidad de fichas que aparece en el tablero, al finalizar la partida;
	\item[b)] 42 es la menor cantidad de puntos que suma el tablero, al finalizar la partida.
\end{enumerate}
\end{proposicion}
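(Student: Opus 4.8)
El plan es modelar las fichas como aristas de un multigrafo sobre los v\'ertices $\{0,1,\dots,6\}$, donde la ficha $[a,b]$ es la arista $\{a,b\}$ y los dobles $[a,a]$ son lazos. Con esta lectura, el tablero de una partida es un recorrido que usa cada ficha a lo m\'as una vez, y sus dos extremos son los v\'ertices inicial y final de dicho recorrido. Por la Proposici\'on~\ref{prop:trancaiguales}, en una tranca por el n\'umero $k$ ambos extremos valen $k$, de modo que el recorrido es cerrado. Primero establecer\'ia los dos hechos estructurales que hacen rutinario todo lo dem\'as: (i) las $7$ fichas que contienen a $k$ est\'an todas sobre la mesa, pues si un jugador conservara una ficha con un $k$ podr\'ia jugarla sobre cualquiera de los extremos, contradiciendo la tranca; y (ii) cada valor $0,1,\dots,6$ aparece un n\'umero par de veces entre las mitades de las fichas del tablero. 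El punto (ii) se obtiene con el mismo argumento de paridad de la Proposici\'on~\ref{prop:trancaiguales}: al recorrer la cadena cada valor interior queda emparejado (aporta de a dos) y los \'unicos extremos valen $k$, as\'i que todo valor aparece una cantidad par de veces.

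Para el apartado a) contar\'ia las mitades. Las $7$ fichas con $k$ aportan el valor $k$ ocho veces, pero cada valor $j\ne k$ aparece exactamente una vez (solo en $[k,j]$), que es impar. Por (ii) el total debe ser par, luego las fichas restantes del tablero —ninguna de las cuales contiene a $k$, por (i)— deben aportar una cantidad impar, y en particular al menos una, de cada uno de los $6$ valores $j\ne k$. Eso obliga a por lo menos $6$ mitades adicionales, es decir, al menos $3$ fichas extra, y por ende al menos $7+3=10$ fichas en total. La cota es \'optima: el tablero formado por las $7$ fichas $[0,j]$ junto con $[1,2]$, $[3,4]$ y $[5,6]$ tiene $10$ fichas, todos sus v\'ertices de grado par y es conexo, luego admite un circuito euleriano con ambos extremos en $0$.

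Para el apartado b) sumar\'ia puntos con la misma descomposici\'on. Las $7$ fichas que contienen a $k$ suman $\sum_{j=0}^{6}(k+j)=7k+21$ puntos. Por el conteo anterior, cada valor $j\ne k$ aparece una cantidad impar (al menos una) de veces entre las fichas restantes, de modo que estas aportan al menos $\sum_{j\ne k} j = 21-k$ puntos. As\'i, el puntaje del tablero es al menos $(7k+21)+(21-k)=6k+42\ge 42$, con m\'inimo en $k=0$. El mismo ejemplo anterior realiza la igualdad: las fichas $[0,j]$ suman $21$ y $[1,2]$, $[3,4]$, $[5,6]$ suman $3+7+11=21$, dando exactamente $42$.

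El principal obst\'aculo no es el conteo, que es directo una vez fijados (i) y (ii), sino justificar con cuidado esos dos hechos estructurales y, sobre todo, verificar que el tablero extremal provenga de una partida realmente jugable: hay que exhibir una secuencia concreta de jugadas legales de los cuatro jugadores que desemboque en esa configuraci\'on de $10$ fichas y $42$ puntos. Esta parte de \emph{realizabilidad} es la que requiere una construcci\'on expl\'icita m\'as que un argumento combinatorio, y conviene presentarla mediante una tabla an\'aloga a la de la partida inmortal.
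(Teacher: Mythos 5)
Tu argumento de cota inferior es correcto y, en esencia, es el mismo del art{\'\i}culo: ambos usan que las siete fichas con $k$ est{\'a}n forzosamente en la mesa y que, por la concatenaci{\'o}n y la coincidencia de extremos, cada valor $j\neq k$ debe aparecer un n{\'u}mero par de veces, lo que obliga a al menos tres fichas adicionales y da la cota $6k+42$. El lenguaje de multigrafos y paridad es s{\'o}lo un reempaque de ese mismo conteo, y la verificaci{\'o}n num{\'e}rica $(7k+21)+(21-k)=6k+42$ coincide con el $8k+2(21-k)$ del art{\'\i}culo.

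El problema est{\'a} en la mitad de alcanzabilidad, y t{\'u} mismo lo nombras sin resolverlo. La proposici{\'o}n afirma que 10 y 42 son \emph{m{\'\i}nimos}, de modo que hace falta exhibir una partida que termine en tranca con exactamente esas cantidades, no s{\'o}lo un tablero. Tu circuito euleriano con las fichas $[0,j]$, $[1,2]$, $[3,4]$ y $[5,6]$ garantiza que esa cadena existe como configuraci{\'o}n, pero no que provenga de una partida legal: en las reglas del art{\'\i}culo un jugador pasa {\'u}nicamente cuando no puede jugar, as{\'\i} que hay que repartir las 28 fichas en cuatro manos de siete y dar un orden de jugadas en el que cada ficha colocada sea jugable en su turno y cada pase sea forzado. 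Eso es precisamente lo que el art{\'\i}culo hace con sus manos expl{\'\i}citas y la tabla de turnos (que, de hecho, realiza exactamente tu tablero con conectores $[1,2]$, $[3,4]$, $[5,6]$), y lo que en tu propuesta queda como una tarea pendiente. Tal como est{\'a}, tu demostraci{\'o}n prueba las desigualdades ``$\geq 10$'' y ``$\geq 42$'' pero no la igualdad, as{\'\i} que est{\'a} incompleta hasta que construyas esa partida expl{\'\i}cita.
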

\begin{proof}
Si la partida est\'a trancada por el n\'umero $k$, al menos, todas estas fichas
\[[k,0],\ [k,1],\ [k,2],\ [k,3],\ [k,4],\ [k,5]\textnormal{ y }[k,6]\]
fueron jugadas. Salvo en los extremos, cada uno de los siguientes n\'umeros 0, 1, 2, 3, 4, 5 y 6, excluyendo el $k$, aparece una cantidad par de veces por la concatenaci\'on de fichas. Por ende, el m\'inimo de veces que aparece cada uno de estos n\'umeros es dos. Como tenemos seis n\'umeros que tienen que parecer, al menos una vez m\'as, esto puede ser realizado con un m\'inimo de tres fichas. Deducimos entonces que, es necesario tener al menos 10 fichas para realizar una tranca. El siguiente ejemplo, muestra una partida que finaliza en tranca en el que se utilizan 10 fichas. Por tanto, queda demostrado el item a).

\vspace{0,3 cm}
Los jugadores tendr\'an las siguientes fichas:
\begin{enumerate}
	\item[] Jugador 1: $[0,0]$, $[0,3]$, $[5,6]$, $[6,6]$, $[5,5]$, $[4,4]$ y $[3,3]$;
	\item[] Jugador 2: $[0,1]$, $[3,4]$, $[6,0]$, $[2,2]$, $[1,1]$, $[4,1]$ y $[6,4]$;
	\item[] Jugador 3: $[1,2]$, $[4,0]$, $[6,3]$, $[5,4]$, $[6,2]$, $[3,2]$ y $[5,3]$;
	\item[] Jugador 4: $[2,0]$, $[0,5]$, $[6,1]$, $[2,5]$, $[5,1]$, $[3,1]$ y $[4,2]$
\end{enumerate}
La partida en cuesti\'on, se representa a trav\'es de la siguiente tabla
\begin{center}
\begin{tabular}{|c|c|c|c|}
\hline  &Turno 1& Turno 2& Turno 3 \\ \hline
Jugador 1&[0,0]&[0,3]&[5,6]\\ \hline
Jugador 2&[0,1]&[3,4]&[6,0]\\ \hline
Jugador 3&[1,2]&[4,0]&\\ \hline
Jugador 4&[2,0]&[0,5]&\\ \hline
\end{tabular}
\end{center}
Para demostrar el item b), note que en la partida anterior los puntos que aparecen en el tablero al finalizar la partida son 42. Por otra parte, para una partida que este trancada por el n\'umero $k$, se necesitan jugar las 7 fichas que tienen el n\'umero $k$, junto con al menos tres fichas, de modo que, en el tablero aparezca cada uno de los siguientes n\'umeros, una cantidad par de veces: 1, 2, 3, 4 , 5 y 6, excluyendo el $k$. Por tanto, en el tablero al finalizar dicha partida tendremos al menos 8 veces el n\'umero k y dos veces cada uno de los siguientes n\'umeros: 1, 2, 3, 4 , 5 y 6, excluyendo el $k$. As\'i la suma de los puntos, es de al menos: 
$$8k+2(1+2+3+4+5+6-k)=6k+42$$
Con $k=0$ y 10 fichas, se obtiene el m\'inimo valor.    
\end{proof}

\begin{definicion}
	Una partida que finalice en una tranca, se dice que es una {\bf tranca m\'inima}, si la suma de los puntos del tablero al finalizar la partida es de 42.
\end{definicion}
En virtud de lo anterior, una tranca m\'inima es un tranca por 0 en el que se jugaron 10 fichas. Llamaremos {\bf conectores} a cada una de las tres fichas sin 0 que se juegan en dicha partida. De hecho, si $[a,b]$ es un conector entonces en el tablero al finalizar la partida aparecer\'a de la siguiente forma
$$\cdots[0,a][a,b][b,0]\cdots,$$
M\'as a\'un, no hay otros $a$ y $b$ en el tablero. En virtud de este hecho, finalizamos la subsecci\'on con la siguiente observaci\'on
\begin{observacion}\label{obs:nolleva}
En una tranca m\'inima, si un jugador pasa por el n\'umero $k\neq 0$, entonces dicho jugador no tenia fichas con el n\'umero $k$, desde el inicio de la partida (es decir, desde la mano inicial).
\end{observacion}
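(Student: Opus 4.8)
El plan es apoyarse en la regla b\'asica del juego: un jugador pasa \emph{\'unicamente} cuando no puede colocar ficha en ninguno de los dos extremos del tablero. Primero recordar\'e la estructura de una tranca m\'inima: para cada $k\neq 0$, las \'unicas fichas con el n\'umero $k$ que alcanzan a jugarse son $[0,k]$ y el \'unico conector que contiene a $k$, digamos $[k,m]$, puesto que en el tablero el n\'umero $k$ aparece exactamente dos veces, dentro del bloque $[0,k][k,m][m,0]$. Por lo tanto, las otras cinco fichas con el n\'umero $k$, esto es $[k,j]$ con $j\in\{1,\dots,6\}\setminus\{m\}$ (incluyendo a $[k,k]$), no se juegan nunca y permanecen en alguna mano durante toda la partida.

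Supongamos que el jugador $P$ pasa en un turno en el que el n\'umero $k$ ocupa un extremo del tablero. Como $P$ pasa, no puede jugar sobre ese extremo, de modo que en ese instante $P$ no tiene en su mano ninguna ficha con el n\'umero $k$. De aqu\'i deducir\'e enseguida que $P$ nunca tuvo ninguna de las cinco fichas $[k,j]$ anteriores: al no jugarse jam\'as, $P$ las conservar\'ia hasta el momento de pasar y, por la regla del juego, podr\'ia colocar cualquiera de ellas sobre el extremo $k$, contradiciendo que pase. As\'i, la \'unica forma de que $P$ hubiese tenido originalmente una ficha con $k$ es que hubiese tenido $[0,k]$ o el conector $[k,m]$ y ya la hubiese jugado.

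Para concluir, debo descartar esa posibilidad, y aqu\'i est\'a la dificultad principal. Como $k$ ocupa una sola posici\'on en el tablero final, el n\'umero $k$ figura en un extremo s\'olo durante un \'unico tramo de turnos, que se abre al jugarse la primera de las fichas $\{[0,k],[k,m]\}$ y se cierra al jugarse la segunda; el pase de $P$ cae dentro de ese tramo. La ficha que cierra el tramo no puede ser de $P$: como a\'un no se ha jugado cuando $P$ pasa, $P$ la tendr\'ia en su mano y podr\'ia colocarla sobre el extremo $k$. Queda entonces el caso en que $P$ fue quien \emph{abri\'o} el tramo y luego pas\'o con $k$ todav\'ia expuesto; excluir este caso es el obst\'aculo central de la prueba. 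La idea ser\'a volver a usar la regla del juego para mostrar que, con $k$ expuesto, la \'unica ficha admisible sobre ese extremo que respeta la configuraci\'on de tranca m\'inima es precisamente la que lo cierra, y analizar con cuidado las jugadas en el otro extremo para comprobar que no pueden mantener a $k$ expuesto hasta el siguiente turno de $P$. Este control de las jugadas intermedias, a la luz de lo que significa exactamente \emph{pasar por el n\'umero} $k$, es la parte fina que completa la Observaci\'on~\ref{obs:nolleva}.
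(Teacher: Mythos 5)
Tu plan reproduce, y de hecho hace expl\'icito, el argumento que el art\'iculo deja impl\'icito: la Observaci\'on~\ref{obs:nolleva} se enuncia sin demostraci\'on, como consecuencia ``inmediata'' de que en una tranca m\'inima cada n\'umero $k\neq 0$ aparece en el tablero \'unicamente en el par adyacente $[0,k][k,m]$. Tus dos primeras reducciones son correctas: las cinco fichas con $k$ que nunca se juegan no cambian de mano (luego quien pasa por $k$ jam\'as las tuvo), y la ficha que cierra la exposici\'on de $k$ tampoco pudo estar nunca en la mano de quien pasa, pues a\'un no se ha jugado en ese momento. Con ello identificas con precisi\'on el \'unico caso no trivial: que el propio jugador haya jugado la ficha que abre la exposici\'on de $k$ y despu\'es pase con $k$ todav\'ia expuesto. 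Ese caso lo dejas expl\'icitamente sin cerrar, as\'i que, como demostraci\'on, tu propuesta est\'a incompleta.

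Lo esencial es que ese hueco no puede cerrarse: en ese caso la afirmaci\'on es falsa, de modo que la Observaci\'on~\ref{obs:nolleva}, tal como est\'a enunciada, admite contraejemplos. Uno concreto: manos iniciales J1: $[0,0],[0,2],[0,5],[1,3],[2,3],[3,3],[3,5]$; J2: $[0,3],[1,1],[1,4],[1,6],[2,4],[2,6],[4,6]$; J3: $[1,2],[3,4],[5,6],[1,5],[2,5],[3,6],[5,5]$; J4: $[0,1],[0,4],[0,6],[2,2],[4,4],[4,5],[6,6]$. Jugadas, en orden: J1 $[2,0]$, J2 $[0,3]$ sobre el 0, J3 $[1,2]$, J4 $[0,1]$, J1 $[0,0]$, J2 pasa por 0 y 3, J3 $[3,4]$, J4 $[4,0]$, J1 $[0,5]$, J2 pasa por 0 y 5, J3 $[5,6]$, J4 $[6,0]$. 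Todo es legal: quien pasa no tiene ficha jugable (la mano restante de J2 es $[1,1],[1,4],[1,6],[2,4],[2,6],[4,6]$), y en los turnos con varias fichas jugables el jugador elige una, lo cual permiten las reglas. El tablero final es $[0,0][0,1][1,2][2,0][0,3][3,4][4,0][0,5][5,6][6,0]$, se jugaron las siete fichas con 0 y los tres conectores, nadie conserva fichas con 0 y la partida queda trancada: es una tranca m\'inima. Sin embargo, J2 pas\'o por $3\neq 0$ habiendo tenido $[0,3]$ en su mano inicial. Lo que tus dos primeros pasos s\'i demuestran es la versi\'on d\'ebil: desde el momento del pase en adelante, el jugador no tiene fichas con $k$; equivalentemente, la conclusi\'on sobre la mano inicial vale para un jugador que no ha jugado ninguna ficha antes de pasar. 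Esa versi\'on basta para el Lema~\ref{lama:pasapor2} y la Proposici\'on~\ref{prop:perd-jugo-1}, donde el equipo perdedor no juega ficha alguna, pero el enunciado general que se invoca en la Proposici\'on~\ref{prop:108} requiere exactamente esta misma correcci\'on.
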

\section{Resultados}
En est\'a secci\'on; presentaremos nuestros resultados, en el que se incluye una demostraci\'on del teorema principal de este trabajo. Para ello, empezaremos mostrando una situaci\'on que ocurre en algunas de las trancas m\'inimas, con respecto al equipo perdedor de la partida.

\begin{proposicion}\label{prop:nopasanporelmismo}
Consideremos una tranca m\'inima. Si ambos jugadores del equipo perdedor no tienen fichas con 0 y 1 (respectivamente, 2, 3, 4, 5 y 6) en sus manos iniciales, entonces uno de ellos juega al menos una ficha.
\end{proposicion}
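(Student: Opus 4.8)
The plan is to argue by contradiction: I assume that \emph{neither} of the two players of the losing team ever plays a tile, and I derive an impossibility from a direct count of the tiles that carry the relevant number. I treat the number $1$; the cases of $2,3,4,5,6$ are obtained by relabelling.

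First I would recall the structure of a tranca m\'inima recorded above. Exactly ten tiles sit on the board: the seven zero-tiles $[0,0],[0,1],\dots,[0,6]$ together with three connectors, and the three connectors realise a perfect matching of $\{1,2,3,4,5,6\}$ into pairs. Hence each number $k\in\{1,\dots,6\}$ occurs on the board in precisely two tiles, namely $[0,k]$ and the unique connector $[k,k']$ containing it. In particular, of the seven tiles of the full set that contain the number $1$, exactly $7-2=5$ lie off the board and therefore remain in the players' hands when the game ends.

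Next I would turn the no-play assumption into a hand count. If the losing team plays nothing, then all ten board tiles are supplied by the winning team; since that team starts with $2\cdot 7=14$ tiles, its final hand contains exactly $14-10=4$ tiles. Now the hypothesis asserts that the losing team holds no tile with the number $1$, so the five off-board tiles containing $1$ must \emph{all} lie in the winning team's final hand. This is the crux of the argument, and it is absurd: five tiles cannot be packed into a hand of four. The contradiction forces at least one player of the losing team to play a tile, which is the assertion.

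The argument is short, so rather than an obstacle the delicate points are purely bookkeeping. I would make sure that the two on-board tiles containing $1$ are genuinely distinct (they are, because $k'\neq0$), and that the winning hand really has size four (which rests only on the losing team playing nothing, so that the winning team alone accounts for all ten board tiles). I would also note the role of the hypothesis ``no tile with $0$'': since every zero-tile ends up on the board, a player who never plays cannot have started with a zero-tile, so this hypothesis is exactly what makes ``the losing team plays nothing'' consistent, while the genuine contradiction is delivered by the pigeonhole count on the tiles bearing the second number. Finally, I note that Observaci\'on~\ref{obs:nolleva} would allow a more dynamical variant of the proof, tracking the passes, but the static count above seems the most economical route.
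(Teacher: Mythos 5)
Your proof is correct, but it follows a genuinely different route from the paper's. The paper uses the full hypothesis (no $0$'s and no $1$'s in the losing hands) to pin down the winning team's initial hand exactly: the thirteen tiles containing $0$ or $1$ plus one extra tile $[a,b]$ with $a,b$ between $2$ and $6$; it then observes that at most two of these fourteen tiles can be connectors (only one connector may contain a $1$, plus possibly $[a,b]$), while a tranca m\'inima requires three connectors to be played, so the losing team must supply at least one. Your argument instead runs a pigeonhole count on the final hands: under the no-play assumption the winning team ends with $14-10=4$ tiles, yet the five off-board tiles carrying the number $1$ would all have to sit in that hand. Both arguments rest on the same structural fact (the board of a tranca m\'inima consists of the seven $0$-tiles plus three connectors realising a perfect matching of $\{1,\dots,6\}$, so each nonzero number occurs on the board in exactly two tiles), but the counts are different. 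Your version is slightly more economical and, as you correctly note, shows that only the hypothesis on the number $1$ is truly needed for the contradiction, since the absence of $0$-tiles in the losing hands is anyway forced by the no-play assumption (all seven $0$-tiles must reach the board). What the paper's version buys is the extra structural information that the tile the losing team is forced to play is a \emph{connector}, which, while not part of the statement, is in the spirit of how connectors drive the later arguments (Lema~\ref{lama:pasapor2}, Proposici\'on~\ref{prop:108}).
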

\begin{proof}
Supongamos que el equipo perdedor, no tiene en sus manos iniciales fichas con 0 y 1 (el resto de los casos, el argumento es similar). Entonces, el equipo ganador tiene inicialmente, las siguientes 14 fichas
\[[0,0],\ [0,1],\ [0,2],\ [0,3],\ [0,4],\ [0,5],\ [0,6]\]
\[[1,1],\ [1,2],\ [1,3],\ [1,4],\ [1,5],\ [1,6],\ [a,b]\]
para algunos $a$ y $b$,  n\'umeros enteros del 2 al 6. Notemos que entre estas fichas, a lo m\'as, hay dos conectores de la partida en cuesti\'on. Usando el hecho de que en dicha partida se juegan tres conectores, se deduce que el equipo perdedor tiene que jugar al menos una ficha (un conector).
\end{proof}

El siguiente lema nos indica la cantidad de puntos que tiene el juego del domin\'o en todas sus fichas.  Dicho dato es clave, para deducir los puntajes m\'aximos de ciertas partidas. 
\begin{lema}
La suma de los puntos presentes en todas las fichas del juego de domin\'o es 168.
\end{lema}
\begin{proof}
Cada n\'umero entero desde el 0 al 6, aparece 8 veces en las fichas (ver Subsecci\'on \ref{sub-fichas}). As\'i, el total de los puntos en las fichas es $$8 \cdot 0+8 \cdot 1+8 \cdot 2+8 \cdot3+8 \cdot4+8 \cdot5+8 \cdot6= 8\cdot(1+2+3+4+5+6)=8\cdot21=168.$$
\end{proof}
La siguiente proposici\'on, nos dice que no es posible que un jugador, al iniciar una partida de domin\'o, no tenga cuatro n\'umeros dados en su mano inicial. 
\begin{proposicion}\label{prop:sin4noexiste}
Dados cuatro n\'umeros enteros distintos entre 0 y 6, no existen siete fichas de domin\'o sin estos n\'umeros.
\end{proposicion}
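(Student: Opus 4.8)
La idea es transformar el enunciado en un problema de conteo directo. Lo primero que har\'ia es fijar los cuatro n\'umeros dados y observar que una ficha $[a,b]$ no contiene a ninguno de ellos precisamente cuando ambas entradas $a$ y $b$ pertenecen al conjunto complementario, que consta de los tres n\'umeros restantes entre 0 y 6. De este modo, exigir la existencia de siete fichas que eviten los cuatro n\'umeros equivale a exigir siete fichas cuyas dos entradas provengan de un conjunto fijo de tres s\'imbolos.

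El segundo paso ser\'ia contar con exactitud cu\'antas fichas distintas se pueden formar usando solo esos tres s\'imbolos. Como el convenio de la Subsecci\'on~\ref{sub-fichas} identifica $[a,b]$ con $[b,a]$, tales fichas corresponden a los multiconjuntos de tama\~no dos sobre un conjunto de tres elementos: las tres fichas dobles del tipo $[a,a]$, junto con las tres fichas del tipo $[a,b]$ con $a\neq b$. En total son $\binom{3}{2}+3=6$ fichas.

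Finalmente, dado que $6<7$, concluir\'ia que no es posible exhibir siete fichas que eviten simult\'aneamente los cuatro n\'umeros dados, lo que prueba la proposici\'on. No anticipo ninguna dificultad de fondo en este argumento, pues se reduce a un conteo elemental; el \'unico punto que conviene cuidar es no pasar por alto las fichas dobles al contar, ya que la identificaci\'on $[a,b]=[b,a]$ podr\'ia inducir a considerar \'unicamente los pares con entradas distintas. En todo caso, cualquier conteo razonable se mantiene por debajo de siete, de modo que la conclusi\'on es robusta frente a ese desliz.
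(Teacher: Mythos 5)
Tu propuesta es correcta y sigue esencialmente el mismo camino que la demostración del artículo: ambas reducen el problema a observar que una ficha que evita los cuatro números dados debe tener sus dos entradas en el conjunto complementario de tres símbolos, y que solo existen seis fichas así (tres dobles y tres mixtas), de modo que $6<7$ da la conclusión. La única diferencia es cosmética: el artículo lista explícitamente las seis fichas $[a,a],\ [a,b],\ [a,c],\ [b,b],\ [b,c],\ [c,c]$, mientras que tú las cuentas como multiconjuntos.
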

\begin{proof}
Supongamos que $a$, $b$ y $c$ son los tres n\'umeros enteros del 0 al 6, que son distintos a los cuatro n\'umeros dados. Note que, las \'unicas fichas del domin\'o que contienen exclusivamente estos tres n\'umeros, son las siguientes:
$$[a,a],\ [a,b],\ [a,c],\ [b,b],\ [b,c],\ [c,c]$$
Por tanto, no existen 7 fichas de domin\'o sin incluir algunos de los cuatro n\'umeros dados. 
\end{proof}
Ahora, continuamos con el siguiente resultado:

\begin{lema}\label{lama:pasapor2}
Si en una tranca m\'inima, en un turno de un jugador del equipo perdedor este pasa por dos n\'umeros diferentes de 0, entonces el equipo perdedor jug\'o, al menos, una ficha.
\end{lema}
\begin{proof}
Recordemos que en una partida que termine con una tranca m\'inima, se juegan las siete fichas con 0 junto con tres conectores $[j_1,j_2],\ [j_3,j_4]$ y $[j_5,j_6]$, donde
\[\{j_1,j_2,j_3,j_4,j_5,j_6\}=\{1,2,3,4,5,6\},\]
En particular, todos los $j_r$'s son diferentes entre si. Ahora, supongamos que el equipo perdedor no juega una ficha en la partida en cuesti\'on. Bajo este supuesto, es claro que el equipo ganador inicia la partida. Adem\'as, supondremos que el jugador 1 (respectivamente, jugador 3) deja la mesa de la partida, despu\'es de jugar una ficha, de la siguiente forma: 
\[[j_r,\dots,j_s]\]
para algunos $r$ y $s$, distintos entre si, enteros del 1 al 6. Por tanto, el jugador 2 (respectivamente, jugador 4) pasa, al menos, por los n\'umeros $0$, $j_r$ y $j_s$. Consecutivamente, el jugador 3 (respectivamente, el jugador 1) podría pasar o podría jugar una ficha. En cualesquiera de los dos casos, despu\'es de dicho turno, en uno de los extremos del tablero aparecer\'a el n\'umero $j_s$ o el n\'umero $j_r$, seg\'un corresponda. Esto implicar\'ia que, el jugador 4 (respectivamente, jugador 2) tambi\'en pasar\'ia por el n\'umero $j_s$ o el n\'umero $j_r$. Ahora, desde la Observaci\'on \ref{obs:nolleva} junto con la Proposici\'on \ref{prop:nopasanporelmismo}, obtenemos una contradicci\'on. Dicha contradicci\'on viene de suponer que el equipo perdedor no juega una ficha en la partida señalada. Por tanto, deducimos que el equipo perdedor juega una ficha.         
\end{proof}

\begin{observacion}\label{obs:tableros}
Ahora, estamos en condiciones de decir un poco m\'as de lo que menciona el lema anterior (ver la siguiente proposici\'on). Para ello, es necesario resaltar que en una tranca m\'inima, la configuraci\'on del tablero al finalizar tal partida, tiene una de las siguientes formas $($esencialmente se diferencian por la posici\'on en que aparece la ficha $[0,0]$$)$:
\begin{enumerate}
\item[$\mathbf{T1}:=$] $[0,0][0,j_1][j_1,j_2][j_2,0][0,j_3][j_3,j_4][j_4,0][0,j_5][j_5,j_6][j_6,0]$, donde la ficha $[0,0]$ no fue la \'ultima en jugarse. 

\item[$\mathbf{T2}:=$] $[0,j_1][j_1,j_2][j_2,0][0,0][0,j_3][j_3,j_4][j_4,0][0,j_5][j_5,j_6][j_6,0].$

\item[$\mathbf{T3}:=$] $[0,j_1][j_1,j_2][j_2,0][0,j_3][j_3,j_4][j_4,0][0,0][0,j_5][j_5,j_6][j_6,0].$

\item[$\mathbf{T4}:=$] $[0,j_1][j_1,j_2][j_2,0][0,j_3][j_3,j_4][j_4,0][0,j_5][j_5,j_6][j_6,0][0,0]$, donde la ficha $[0,0]$ es la \'ultima en jugarse. 
\end{enumerate}
donde $j_1,\dots,j_6$ son los n\'umeros enteros del 1 al 6. Notemos que las siguientes afirmaciones son verdaderas: 
\begin{enumerate}
 \item[(a)] El equipo perdedor, pasa exclusivamente por 0 a lo m\'as tres veces, siempre que el equipo ganador sume 9 turnos en la partida $($esto pudiese ocurrir en los tableros $\mathbf{T1}$, $\mathbf{T2}$ y $\mathbf{T3}$$)$.

\item[(b)] El equipo perdedor pasa por 0 cuatro veces, siempre que el equipo ganador sume 10 turnos en la partida $($esto pudiese ocurrir, exclusivamente en el tablero $\mathbf{T4}$$)$. 
\end{enumerate}
	
\end{observacion}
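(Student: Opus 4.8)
El plan es separar la demostración en una parte estructural (la lista $\mathbf{T1}$–$\mathbf{T4}$) y una parte de conteo (los pasos por $0$). Para la parte estructural, partiría de la anatomía de una tranca mínima: en ella se juegan las siete fichas con $0$ y tres conectores $[j_1,j_2]$, $[j_3,j_4]$, $[j_5,j_6]$ con $\{j_1,\dots,j_6\}=\{1,\dots,6\}$. Como cada $j\in\{1,\dots,6\}$ aparece únicamente en su conector y en la ficha $[0,j]$, y como por la Proposici\'on \ref{prop:trancaiguales} los extremos del tablero son $0$, la concatenaci\'on obliga a que cada conector quede flanqueado en la forma $[0,j][j,j'][j',0]$. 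El tablero es entonces una cadena lineal de estos tres bloques y la ficha $[0,0]$; puesto que todos exhiben $0$ en sus dos extremos, cualquier orden es una cadena leg\'itima y, salvo reordenar los conectores, la \'unica libertad es la ranura que ocupa $[0,0]$ entre las cuatro disponibles, lo que produce exactamente $\mathbf{T1}$, $\mathbf{T2}$, $\mathbf{T3}$ y $\mathbf{T4}$.

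Para la parte de conteo, primero reducir\'ia ``pasar exclusivamente por $0$'' a la condici\'on de que ambos extremos del tablero sean $0$. Como las fichas solo se agregan por los extremos, en cada instante el tablero es una subcadena contigua de la cadena final, de modo que ambos extremos valen $0$ exactamente cuando esa subcadena empieza y termina en una ficha que muestra $0$ hacia afuera; esto ocurre al completarse cada uno de los tres bloques $[0,j][j,j'][j',0]$ y, seg\'un el momento, al colocarse $[0,0]$ contra un extremo que ya es $0$. Para traducir esto en pasos del equipo perdedor, usar\'ia la Observaci\'on \ref{obs:nolleva} y el Lema \ref{lama:pasapor2}: el equipo perdedor pasa exclusivamente por $0$ solo en los turnos con ambos extremos nulos, mientras que cualquier turno que expusiera dos n\'umeros no nulos distintos forzar\'ia, por el Lema \ref{lama:pasapor2}, una jugada del perdedor.

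En tercer lugar vendr\'ia el an\'alisis de casos seg\'un la ranura y el momento de $[0,0]$. En el r\'egimen en que $[0,0]$ se juega al final (tablero $\mathbf{T4}$), a las tres apariciones de ``ambos extremos $0$'' que dan los bloques se a\~nade una cuarta producida por la colocaci\'on final de $[0,0]$ sobre un extremo ya nulo; comprobando que la alternancia de turnos sit\'ua cada una de esas cuatro configuraciones frente a un turno del equipo perdedor, se obtienen los cuatro pasos por $0$ anunciados, en correspondencia con los diez turnos del ganador. En el r\'egimen complementario (la $[0,0]$ no es la \'ultima, tableros $\mathbf{T1}$, $\mathbf{T2}$, $\mathbf{T3}$), la Proposici\'on \ref{prop:nopasanporelmismo} obliga al equipo perdedor a jugar al menos un conector; ese turno deja de ser un paso por $0$ y la $[0,0]$ ya no aporta una cuarta configuraci\'on de extremos nulos, con lo que el conteo queda acotado por tres, en correspondencia con los nueve turnos del ganador.

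La parte m\'as delicada ser\'a el control de la paridad de los turnos. Habr\'a que verificar que el n\'umero de configuraciones de ``ambos extremos $0$'' que efectivamente caen en un turno del perdedor no depende del orden (izquierda o derecha) en que el ganador prolonga la cadena, y que la clasificaci\'on $\mathbf{T1}$–$\mathbf{T4}$ identifica los tableros espejo, de modo que jugar $[0,0]$ en primer lugar no se contabilice como un quinto patr\'on distinto de $\mathbf{T4}$. Igualmente habr\'a que precisar qu\'e cuenta como ``turno'' del equipo ganador (jugadas efectivas frente a pasos finales de confirmaci\'on de la tranca) para casar los nueve y diez turnos con los tableros respectivos. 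Anticipo que este emparejamiento entre los instantes de extremos nulos y los turnos del perdedor, junto con la ubicaci\'on temporal de $[0,0]$, es el obst\'aculo central; una vez fijado, los valores ``cuatro'' y ``a lo m\'as tres'' se siguen por inspecci\'on directa de los cuatro tableros.
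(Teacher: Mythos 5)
La Observaci\'on \ref{obs:tableros} se enuncia en el art\'iculo sin demostraci\'on alguna, as\'i que tu propuesta s\'olo puede evaluarse por sus propios m\'eritos. La parte estructural est\'a bien: como cada $j\in\{1,\dots,6\}$ aparece \'unicamente en $[0,j]$ y en su conector, la concatenaci\'on y el hecho de que los extremos finales sean 0 fuerzan los bloques $[0,j][j,j'][j',0]$, y la \'unica libertad restante es la ranura que ocupa $[0,0]$, lo que da $\mathbf{T1}$--$\mathbf{T4}$. Tambi\'en es correcta tu reducci\'on ``pasar exclusivamente por 0 $\Leftrightarrow$ ambos extremos son 0 $\Leftrightarrow$ el tablero actual es una uni\'on contigua de bloques completos''.

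El hueco genuino est\'a exactamente en el paso que t\'u mismo dejas pendiente como ``obst\'aculo central'', y ese paso falla. Una tranca m\'inima termina en el instante en que se juega la d\'ecima ficha (as\'i lo usan las tablas de las Proposiciones \ref{prop:perd-jugo-1} y \ref{prop:exam}: tras la \'ultima ficha nadie m\'as pasa), de modo que la ``cuarta configuraci\'on'' de extremos nulos que en $\mathbf{T4}$ produce la colocaci\'on final de $[0,0]$ nunca queda frente a un turno del equipo perdedor. En el r\'egimen en que se aplica el item (b) (el perdedor no juega ficha alguna), sumar 10 turnos significa que el ganador juega las 10 fichas sin pasar; entonces los nueve pases del perdedor enfrentan precisamente los tableros tras las fichas $1,\dots,9$, y entre \'estos hay a lo m\'as tres uniones completas de bloques, pues una cadena encajada de uniones propias y contiguas de los cuatro bloques tiene longitud a lo m\'as 3. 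Tu m\'etodo demuestra por tanto ``a lo m\'as tres'' pases exclusivamente por 0, no los ``cuatro'' del item (b); esa cota m\'as fina basta para los usos posteriores de la observaci\'on (las Proposiciones \ref{prop:perd-jugo-1} y \ref{prop:108} s\'olo necesitan cotas superiores), pero el emparejamiento que prometes verificar no puede completarse. Hay adem\'as dos defectos secundarios: en el caso (a) invocas la Proposici\'on \ref{prop:nopasanporelmismo}, cuya hip\'otesis (que ambos jugadores del perdedor carezcan de 0 y de $k$ en sus manos iniciales) no est\'a disponible aqu\'i, y que es adem\'as innecesaria, pues con s\'olo 9 turnos el ganador juega a lo m\'as 9 fichas y el perdedor debe jugar al menos una; y tu conteo tampoco excluye que una misma configuraci\'on de extremos nulos enfrente dos turnos del perdedor separados por un pase intermedio del ganador, posibilidad que debe descartarse usando expl\'icitamente la hip\'otesis sobre el n\'umero de turnos del ganador (cada pase del ganador hace que sus turnos excedan las fichas que juega).
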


\begin{proposicion}\label{prop:perd-jugo-1}
En una tranca m\'inima, el equipo perdedor jug\'o, al menos, una ficha.
\end{proposicion}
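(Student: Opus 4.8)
El plan es razonar por contradicci\'on: supondremos que el equipo perdedor no juega ninguna ficha. Bajo este supuesto, las $10$ fichas del tablero (las $7$ fichas con $0$ y los $3$ conectores) son colocadas exclusivamente por el equipo ganador; en consecuencia, el equipo perdedor conserva $14$ fichas ($7$ por jugador), ninguna de las cuales contiene un $0$. Usar\'e, adem\'as, el hecho de que los equipos alternan turnos: como los jugadores $1$ y $3$ forman un equipo y los jugadores $2$ y $4$ el otro, en el orden de juego $1,2,3,4,1,2,3,4,\dots$ cada turno corresponde a un equipo distinto del turno anterior. Por lo tanto, inmediatamente despu\'es de cada ficha colocada por el equipo ganador, el siguiente turno es de un jugador del equipo perdedor que, por el supuesto, pasa.

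La parte central (y el principal obst\'aculo) ser\'a probar que cada n\'umero $m\in\{1,2,3,4,5,6\}$ queda expuesto en un extremo del tablero justo en un turno en que pasa un jugador del equipo perdedor. Para ello, observo que entre las fichas jugadas hay exactamente dos que contienen a $m$: la ficha $[0,m]$ y el \'unico conector que lo contiene (recu\'erdese que no se juega la ficha $[m,m]$). Considero la primera de esas dos fichas en ser jugada: al colocarse, no pudo haber emparejado su valor $m$ con un extremo, pues antes no hab\'ia ning\'un $m$ en el tablero, de modo que necesariamente deja a $m$ expuesto en un extremo. Esta jugada no puede ser la \'ultima de la partida, ya que deja un extremo no nulo, mientras que en una tranca m\'inima el tablero termina con ambos extremos iguales a $0$; as\'i, el turno siguiente existe y corresponde a un jugador perdedor que pasa teniendo a $m$ en un extremo. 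Por la Observaci\'on~\ref{obs:nolleva}, dicho jugador no posee ninguna ficha con $m$ desde su mano inicial. La dificultad estar\'a en justificar con cuidado que $m$ queda efectivamente expuesto y que siempre existe un turno perdedor posterior; ambos puntos descansan en que cada ficha jugada contiene a lo m\'as un $m$ y en la forma del tablero final.

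Para concluir, aplicar\'e un argumento de casillas junto con la Proposici\'on~\ref{prop:sin4noexiste}. Por lo anterior, cada uno de los seis n\'umeros $1,\dots,6$ est\'a ausente de la mano inicial de al menos uno de los dos jugadores del equipo perdedor; luego, al repartir estos seis n\'umeros entre los dos jugadores, alguno de ellos carece de al menos tres de ellos. Ese jugador no tiene fichas con esos (al menos) tres n\'umeros ni con el $0$, es decir, sus $7$ fichas evitan cuatro n\'umeros distintos, lo que contradice la Proposici\'on~\ref{prop:sin4noexiste}. Dicha contradicci\'on proviene de suponer que el equipo perdedor no juega ficha alguna; por lo tanto, el equipo perdedor juega al menos una ficha. (De forma alternativa, el mismo desenlace se obtiene analizando por separado las configuraciones $\mathbf{T1},\dots,\mathbf{T4}$ de la Observaci\'on~\ref{obs:tableros}, pero el argumento de casillas evita esa distinci\'on de casos.)
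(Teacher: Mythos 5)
Tu demostraci\'on es correcta y, aunque termina igual que la del art\'iculo, llega all\'i por una ruta genuinamente distinta en el paso central. Ambas pruebas comparten el mismo esqueleto: suponer que el equipo perdedor no juega ninguna ficha (de donde ninguno de sus dos jugadores tiene fichas con 0, pues las siete fichas con 0 quedan en el tablero), mostrar que los pases del equipo perdedor fuerzan la ausencia, en las manos iniciales, de suficientes n\'umeros no nulos distintos entre s\'i (v\'ia la Observaci\'on~\ref{obs:nolleva}), y cerrar con un argumento de casillas m\'as la Proposici\'on~\ref{prop:sin4noexiste}. La diferencia est\'a en c\'omo se consigue ese paso central. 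El art\'iculo invoca el Lema~\ref{lama:pasapor2} (bajo el supuesto, ning\'un pase del perdedor puede ser por dos n\'umeros no nulos) junto con el conteo de pases seg\'un las configuraciones $\mathbf{T1}$--$\mathbf{T4}$ de la Observaci\'on~\ref{obs:tableros}, para concluir que hay al menos cinco pases por 0 y por n\'umeros $j$ no nulos, distintos entre s\'i. T\'u, en cambio, argumentas directamente: cada $m\in\{1,\dots,6\}$ aparece en exactamente dos fichas jugadas ($[0,m]$ y un \'unico conector, pues $[m,m]$ no se juega); la primera de ellas en jugarse deja necesariamente a $m$ expuesto (no hab\'ia ning\'un $m$ en el tablero con el cual emparejarlo); esa jugada no puede ser la \'ultima, porque los extremos finales de una tranca m\'inima son 0; y, por la alternancia de equipos en el orden de juego, el turno inmediatamente posterior es un pase del equipo perdedor por $m$. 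Esto produce una conclusi\'on intermedia m\'as fuerte (los seis n\'umeros no nulos resultan ``pasados'', no s\'olo cinco), y tu prueba es autocontenida: no necesita el Lema~\ref{lama:pasapor2} ni la casu\'istica de tableros de la Observaci\'on~\ref{obs:tableros}, apoy\'andose \'unicamente en la Observaci\'on~\ref{obs:nolleva} y la Proposici\'on~\ref{prop:sin4noexiste}. A cambio, el enfoque del art\'iculo reutiliza maquinaria ya demostrada (el Lema~\ref{lama:pasapor2} y la Observaci\'on~\ref{obs:tableros} vuelven a usarse en la prueba de la Proposici\'on~\ref{prop:108}), mientras que tu argumento de ``primera ficha que contiene a $m$'' es nuevo; ambos se extienden igualmente, como nota la observaci\'on posterior del art\'iculo, a trancas por un n\'umero $k\neq 0$ en las que se juegan 10 fichas.
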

\begin{proof}

Supongamos que el equipo perdedor no juega ninguna ficha y, por tanto, el equipo que inicia la partida es el ganador. Como la tranca m\'inima tiene 10 fichas y los jugadores $2$ y $4$ siempre pasan, en cada uno de sus turnos, la partida se extiende al menos hasta el turno 5 del jugador 3, como muestra la siguiente tabla:

\begin{center}
\begin{tabular}{|c|c|c|c|c|c|}
\hline & Turno 1&Turno 2& Turno 3& Turno 4& Turno 5 \\ \hline
Jugador 1& & & & &    \\ \hline
Jugador 2& pasa & pasa & pasa & pasa & pasa  \\ \hline
Jugador 3& & & & &  \\ \hline
Jugador 4& pasa & pasa & pasa & pasa &  \\ \hline
\end{tabular}
\end{center}

En virtud del Lema~\ref{lama:pasapor2}, cada uno de los pases que muestra la tabla es exclusivamente por 0 o por 0 y otro n\'umero $j\neq 0$. Desde la observaci\'on anterior, podemos deducir que existen al menos 5 pases del equipo perdedor, donde el jugador en cuesti\'on no lleva $0$ y $j$, y todos los $j'$s son distintos entre si. Como ambos jugadores pasan por $0$, entonces deducimos que existe un jugador del equipo perdedor que pasa por $0$ y tres n\'umeros, distintos entre si, no nulos. La Observaci\'on \ref{obs:nolleva} junto con la Proposici\'on \ref{prop:sin4noexiste}, nos dicen que tenemos una contradicci\'on. Por tanto, el equipo perdedor juega en la partida, al menos una ficha.
\end{proof}

\begin{observacion}
Los argumentos usados en la demostraci\'on de la proposici\'on anterior, tambi\'en son validos para partidas que terminen en una traca por un n\'umero distinto de 0 y en el que se utilicen (un total de) 10 fichas. 
\end{observacion}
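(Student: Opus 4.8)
The plan is to observe that, throughout the chain of lemmas leading to Proposici\'on~\ref{prop:perd-jugo-1}, the number $0$ intervenes only in its capacity as the value sitting at both exposed ends of the blocked board. So for a game ending in a tranca por $k$ with $k\neq 0$ and a total of $10$ tiles I would simply interchange the roles of $0$ and $k$. The structural facts survive verbatim: the seven tiles $[k,0],[k,1],\dots,[k,6]$ are all on the board, the remaining three board tiles are connectors whose six entries are exactly $\{0,1,\dots,6\}\setminus\{k\}$, each value $\neq k$ occurs exactly twice on the board, and the board takes one of the four shapes $\mathbf{T1}$--$\mathbf{T4}$ of Observaci\'on~\ref{obs:tableros} with $k$ written in place of $0$ (so that its pip sum is $6k+42$, and in particular it is not a tranca m\'inima).

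To make this substitution rigorous I would use the permutation $\sigma$ of $\{0,\dots,6\}$ that transposes $0$ and $k$ and fixes every other value. Applied pip by pip, $\sigma$ is a bijection of the $28$ tiles, and applying it to every move of a game turns legal plays into legal plays and forced passes into forced passes, because matching an exposed end is preserved by a bijection. Hence $\sigma$ carries a game ending in a tranca por $k$ with $10$ tiles to one ending in a tranca por $0$ with $10$ tiles, that is, to a tranca m\'inima; moreover it leaves unchanged both the number of tiles on the board and, for each player, the entire pattern of which turns are plays and which are passes. This is exactly what legitimizes transcribing Observaci\'on~\ref{obs:nolleva}, Proposici\'on~\ref{prop:nopasanporelmismo}, Lema~\ref{lama:pasapor2} and the pass count in Observaci\'on~\ref{obs:tableros} with $k$ in place of $0$; Proposici\'on~\ref{prop:sin4noexiste} is already invariant under any relabeling of the seven values.

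With these relabeled statements available, the argument of Proposici\'on~\ref{prop:perd-jugo-1} repeats with $k$ for $0$: if the losing team plays no tile, the other team plays all ten board tiles, so the losing team holds none of the seven tiles containing $k$ and both of its players pass on $k$ every turn; the relabeled Lema~\ref{lama:pasapor2} and Observaci\'on~\ref{obs:tableros} then produce at least five passes on ``$k$ together with a further number'', with those further numbers distinct, forcing one player to pass on $k$ and three distinct values other than $k$, i.e.\ to be missing four values --- contradicting Proposici\'on~\ref{prop:sin4noexiste}.

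The one genuine subtlety, and the step I expect to be the main obstacle, is that $\sigma$ alters the pip totals and may therefore interchange which team carries the larger sum, hence which team is the \emph{perdedor}. To sidestep this I would stress that the contradiction just described never refers to points: it only uses that some team plays zero tiles. Thus the proof of Proposici\'on~\ref{prop:perd-jugo-1} in fact establishes the point-free statement that in a tranca m\'inima no team can pass the entire game, and this statement transfers along $\sigma$ precisely because $\sigma$ preserves the set of moves made by each player. Consequently no team plays zero tiles in a tranca por $k$ with $10$ tiles either, and in particular the losing team plays at least one.
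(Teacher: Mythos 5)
Your proposal is correct and follows essentially the same route as the paper, which offers no separate proof of this remark beyond asserting that the arguments of Proposici\'on~\ref{prop:perd-jugo-1} (and the supporting chain: Observaci\'on~\ref{obs:nolleva}, Proposici\'on~\ref{prop:nopasanporelmismo}, Lema~\ref{lama:pasapor2}, Observaci\'on~\ref{obs:tableros}) carry over with $k$ in place of $0$; your transposition $\sigma$ of $0$ and $k$ is simply the canonical formalization of that assertion. Your added care is a genuine improvement in rigor: you correctly observe that none of those proofs ever uses point totals, only the hypothesis that some team plays no tile, which is precisely what legitimizes the transfer even though $\sigma$ can change which team is the \emph{perdedor} --- a subtlety the paper silently glosses over.
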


Notemos que en \cite{dominocompetitivo} se afirma que el mayor puntaje posible de obtener en una tranca es 126, sin embargo, un análisis más detallado muestra lo siguiente:

\begin{observacion}\label{obs:key-1}
Otra de las cosas que podemos deducir de la proposici\'on anterior es que, el equipo perdedor se puede quedar con a lo m\'as trece fichas al finalizar una partida que termine en una tranca. En una primera aproximaci\'on, el equipo perdedor podr\'ia quedarse con las trece fichas de mayor puntaje del domin\'o. Dichas fichas, incluir\'ian las siguientes doce: 
\[[6,6],\ [5,6],\ [4,6],\ [5,5],\ [3,6],\ [4,5],\ [2,6],\ [3,5],\ [4,4],\ [1,6],[2,5],\ [3,4]\]
y para completar la ficha trece, podemos tomar alguna de las siguientes: 
\[[0,6], \ [1,5], \ [2,4], \ [3,3]\]
La suma de los puntos, de las trece fichas de mayor puntaje es 112. Lo que puede hacer pensar que, en una tranca m\'inima este es el puntaje m\'aximo posible a obtener. Sin embargo, si el equipo perdedor se queda con a lo m\'as trece fichas, el equipo ganador se queda con, al menos, cinco fichas. Las cinco fichas, sin el 0, con menor cantidad de puntos son las siguientes:
\[[1,1], \ [1,2],\ [1,3], \ [2,2]\]
y una de entre $[1,4]$ y $[2,3].$ La suma de los puntos en estas cinco fichas es 18. Por lo tanto, en una tranca m\'inima, la suma de los puntos en las fichas del equipo perdedor, a lo m\'as son:
\[\text{(Puntos totales)-(puntos del tablero)-(puntos del equipo ganador al finalizar la partida)=}168-42-18=108\]

Siguiendo el an\'alisis anterior, pero, para trancas donde se utilicen 10 fichas, podemos hacer la siguiente estimaci\'on:

\begin{center}
\begin{tabular}{|c|c|c|c|}
\hline Tranca por & Mesa & 5 fichas menores & Cota m\'axima Posible \\ \hline
0&42&$[1,1],[1,2],[1,3],[2,2],[2,3]$& $108=168 -42-18$\\  \hline
1&48&$[0,0],[0,2],[0,3],[2,2],[0,4]$& $107=168-48-13$\\ \hline
2&54&$[0,0],[0,1],[1,1],[0,3],[1,3]$& $104=168-54-10$\\ \hline
3&60&$[0,0],[0,1],[1,1],[0,2],[1,2]$& $98=168-60-10$\\ \hline
4&66&$[0,0],[0,1],[1,1],[0,2],[1,2]$& $93=168-66-9$\\ \hline
5&72&$[0,0],[0,1],[1,1],[0,2],[1,2]$& $87=168-72-9$\\ \hline
6&78&$[0,0],[0,1],[1,1],[0,2],[1,2]$& $81=168-78-9$\\ \hline
\end{tabular}
\end{center}
\end{observacion}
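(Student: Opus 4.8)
The plan is to reduce the whole observation to one arithmetic estimate built from three ingredients: the total pip count $168$ (from the preceding lemma), the board sum $6k+42$ of a tranca by $k$ that uses $10$ tiles (from the proposition that establishes the minimal board), and a forced lower bound for the winning team's hand. First I would fix how the $28$ tiles are distributed when the game blocks on $10$ tiles: the board carries $10$, so $18$ remain split between the two teams; since each team starts with $14$ and, by Proposici\'on~\ref{prop:perd-jugo-1} together with the remark extending it to trancas by an arbitrary $k$ on $10$ tiles, the losing team plays at least one tile, that team keeps at most $14-1=13$. This is the first assertion of the observation, and it forces the winning team to keep at least $18-13=5$ tiles.

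Next I would turn the score into a subtraction. The points awarded equal the pip sum of the losing hand, and since every tile lies on the board, in the losing hand, or in the winning hand,
\[
\text{score}=168-(\text{board})-(\text{winning hand}).
\]
Maximizing the score therefore means minimizing the winning hand. Because a tranca by $k$ leaves both ends equal to $k$ (Proposici\'on~\ref{prop:trancaiguales}) and the game is blocked, nobody holds a tile containing $k$, so all seven $k$-tiles sit on the board; for a tranca on $10$ tiles this pins the board at exactly $6k+42$ pips, while the winning team's $\ge 5$ tiles avoid $k$ altogether. Its pip sum is hence at least $m_k$, the sum of the five cheapest $k$-free tiles, and this bound is \emph{unconditional}: any five tiles of a $k$-free hand already sum to at least $m_k$, so no feasibility argument is needed. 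Substituting gives $\text{score}\le 126-6k-m_k$, and evaluating $m_0=18$, $m_1=13$, and the smaller values for $k\ge 2$ reproduces the column of bounds: $108$ for the minimal tranca, $107$ for $k=1$, and quantities strictly below $107$ for every $k\ge 2$.

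The step I expect to be genuinely delicate is not any of these estimates but the gap they leave. For $k=0$ the bound is $108$, one unit above the value in the main theorem, so the observation by itself does not prove the $107$ bound: it only singles out the minimal tranca as the one case that could a priori reach $108$. Closing that unit will require showing that a minimal tranca cannot realize, simultaneously, a losing hand of the thirteen heaviest tiles \emph{and} a winning hand equal to the five cheapest non-zero tiles; here the combinatorics of which three tiles may serve as connectors, and of who is forced to pass by Observaci\'on~\ref{obs:nolleva}, must re-enter. A secondary loose end is that the ``$\le 13$ tiles'' claim is stated for every tranca whereas Proposici\'on~\ref{prop:perd-jugo-1} covers only trancas on $10$ tiles; extending the count to trancas that use more than $10$ tiles --- where the heavier board should still push the bound below $107$ --- is the last piece needed before the table becomes the theorem.
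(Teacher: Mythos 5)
Your proposal is correct and takes essentially the same route as the paper: the losing team keeps at most $13$ tiles (Proposici\'on~\ref{prop:perd-jugo-1} together with the remark extending it to trancas by $k\neq 0$ on $10$ tiles), hence the winner keeps at least $5$ tiles, none containing $k$, and the score is bounded by $168-(6k+42)-m_k$, which is exactly the computation behind the paper's table. Your only departures improve on the paper rather than diverge from it: your values $m_k=8$ for $k\geq 3$ correct small arithmetic slips in the table (which lists tiles summing to $8$ yet subtracts $9$ or $10$, harmless since all such bounds stay below $107$), and the two loose ends you flag --- closing the gap from $108$ to $107$, and handling trancas that use more than $10$ tiles --- are precisely what Proposici\'on~\ref{prop:108} and the paper's unproved \emph{es claro que} reduction before Teorema~\ref{teo:107} are responsible for.
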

Continuamos con el siguiente resultado, donde se puede explicitar una partida donde se obtienen 107 puntos. Este ser\'ia un ejemplo, distinto a la partida inmortal, que tambi\'en da una respuesta positiva a la Pregunta 1.
\begin{proposicion}\label{prop:exam}
Existe una tranca m\'inima, donde el equipo ganador obtiene 107 puntos.
\end{proposicion}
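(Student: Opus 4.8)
El plan es exhibir una partida concreta (las manos iniciales de los cuatro jugadores junto con una tabla de turnos, al estilo de los ejemplos ya presentados) y verificar que la sucesi\'on de jugadas es legal, que termina en una tranca m\'inima y que el equipo perdedor conserva exactamente $107$ puntos. Recordemos que, al terminar en tranca, al equipo con menos puntos se le asignan los puntos del equipo con m\'as puntos; por lo tanto basta con producir una tranca m\'inima en la que el equipo perdedor retenga $107$ puntos (y el ganador los $126-107=19$ restantes, pues las manos finales suman $168-42=126$).

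Primero fijar\'e la configuraci\'on del tablero eligiendo como conectores $[1,3]$, $[2,4]$ y $[5,6]$. Le entregar\'e al equipo ganador (jugadores $1$ y $3$) las siete fichas con $0$, los conectores $[1,3]$ y $[2,4]$, y cinco fichas de bajo puntaje que sumen $19$, por ejemplo $[1,1],[1,2],[2,2],[1,4]$ y $[2,3]$. El equipo perdedor recibir\'a las $13$ fichas restantes, que suman $107$, m\'as el conector $[5,6]$. La idea decisiva del reparto es que el jugador $2$ s\'olo tenga fichas con n\'umeros en $\{1,2,5,6\}$ (a saber $[5,6],[1,5],[1,6],[2,5],[2,6],[5,5],[6,6]$) y que el jugador $4$ s\'olo tenga fichas con n\'umeros en $\{3,4,5,6\}$ (a saber $[3,3],[3,4],[3,5],[3,6],[4,4],[4,5],[4,6]$), de modo que ambos compartan \'unicamente los n\'umeros $5$ y $6$.

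En la partida, los jugadores $1$ y $3$ colocan sus nueve fichas mientras el jugador $4$ pasa en todos sus turnos y el jugador $2$ juega \'unicamente el conector $[5,6]$; as\'i el equipo perdedor juega exactamente una ficha, como permite la Proposici\'on~\ref{prop:perd-jugo-1}. Organizar\'e las jugadas en tres ciclos de la forma $[0,k][k,m][m,0]$: el ciclo de $[5,6]$ se juega al comienzo (el jugador $1$ abre con $[0,5]$, el jugador $2$ responde con $[5,6]$ y el jugador $3$ cierra con $[6,0]$), y luego el equipo ganador arma los ciclos de $[1,3]$ y $[2,4]$ intercalando los pases del equipo perdedor.

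El n\'ucleo de la demostraci\'on, y el principal obst\'aculo, es garantizar que cada pase del equipo perdedor sea forzoso, esto es, que en ese turno el jugador no pueda colocar ninguna de sus fichas. Como ning\'un perdedor posee fichas con $0$, esto se reduce a controlar el extremo no nulo del tablero en cada uno de sus turnos: hay que lograr que el jugador $2$ enfrente s\'olo extremos en $\{0,3,4\}$ (salvo cuando juega $[5,6]$ frente al $5$) y que el jugador $4$ enfrente s\'olo extremos en $\{0,1,2\}$. Esto obliga a elegir con cuidado la orientaci\'on de cada ciclo y la paridad de los turnos: abriendo los ciclos de $[1,3]$ y $[2,4]$ con $[0,1]$ y $[0,4]$ en el instante adecuado, los n\'umeros $1$ y $2$ quedan expuestos justo en turnos del jugador $4$, y los n\'umeros $3$ y $4$ en turnos del jugador $2$. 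Una vez fijada la tabla, la comprobaci\'on es rutinaria: se recorre turno a turno confirmando que cada jugada concatena con un extremo, que cada pase es forzoso y que, al colocarse las diez fichas, ambos extremos valen $0$ y no queda ninguna ficha con $0$ por jugar, con lo que los cuatro jugadores pasan y la partida queda trancada. Finalmente se suman las manos finales de los jugadores $2$ y $4$, obteni\'endose $50+57=107$, lo que concluye.
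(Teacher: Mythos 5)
Tu propuesta es correcta y es esencialmente la misma demostraci\'on del art\'iculo: all\'i tambi\'en se exhibe una partida concreta con id\'entico reparto para el equipo perdedor (jugador 2 solo con n\'umeros en $\{1,2,5,6\}$ y jugador 4 solo con n\'umeros en $\{3,4,5,6\}$), el equipo ganador con las siete fichas con 0 m\'as cinco fichas bajas que suman 19, y conectores $[1,4]$, $[2,3]$, $[5,6]$ en vez de tus $[1,3]$, $[2,4]$, $[5,6]$. Aunque no llegas a escribir la tabla final, tu esquema s\'i se completa a una partida legal --por ejemplo la sucesi\'on de jugadas $[0,5]$, $[5,6]$, $[6,0]$, $[0,4]$, $[4,2]$, $[2,0]$, $[0,1]$, $[1,3]$, $[3,0]$, $[0,0]$, donde el jugador 1 coloca la primera, cuarta, sexta, octava y d\'ecima ficha, el jugador 3 la tercera, quinta, s\'eptima y novena, el jugador 2 la segunda, y todos los pases del equipo perdedor resultan forzosos-- de modo que no falta ninguna idea esencial.
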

\begin{proof}
Supongamos que al principio de la partida, los jugadores en cuesti\'on, tienen las siguientes fichas:
\begin{enumerate}
	\item [] Jugador 1: $[0,0], \ [1,4], \ [0,5], \ [3,0],\ [2,0],\ [1,1]$ y $[1,2];$
	\item [] Jugador 2: $[1,5],\ [1,6],\ [2,5],\ [2,6],\ [5,5],\ [5,6]$ y $[6,6];$
	\item [] Jugador 3: $[0,1],\ [0,4],\ [0,6],\ [2,3], \ [1,3],\ [2,2]$ y $[2,4];$
	\item [] Jugador 4: $[3,3], \ [3,4],\ [3,5],\ [3,6],\ [4,4],\ [4,5]$ y $[4,6].$
\end{enumerate}

Ahora, consideremos la partida que se desarrolla siguiendo lo indicado en la tabla:

\begin{center}
\begin{tabular}{|c|c|c|c|c|c|}
\hline & Turno 1&Turno 2& Turno 3& Turno 4& Turno 5 \\ \hline
Jugador 1&[0,0] &[1,4] &[0,5] &[0,3] & [0,2] y se tranca   \\ \hline
Jugador 2& pasa 0 & pasa 0 y 4 & [5,6] & pasa 0 y 3 & \\ \hline
Jugador 3& [0,1] & [0,4] & [6,0] & [3,2] &  \\ \hline
Jugador 4& pasa 0 y 1 & pasa 0 & pasa 0 & pasa 0 y 2 &  \\ \hline
\end{tabular}
\end{center}

En este caso, el equipo perdedor se queda con las siguientes fichas al finalizar la partida: 
\[[1,5],\ [1,6],\ [2,5], \ [2,6], \ [5,5],\ [6,6],\ [3,3],\ [3,4],\ [3,5],\ [3,6],\ [4,4],\ [4,5]\textnormal{ y }[4,6]\]
las que suman 107 puntos.
\end{proof}
Desde la Observaci\'on \ref{obs:key-1}, sabemos que el puntaje m\'aximo posible a obtener, para el equipo ganador de una tranca m\'inima es de 108. Ahora, estamos en condiciones de enunciar nuestro resultado principal (en el que se descarta est\'a opci\'on).
\begin{teorema}\label{teo:107}
El mayor puntaje que puede obtener un equipo, en una partida de domin\'o que termina en una tranca, es de 107 puntos. 
\end{teorema}
Desde el an\'alisis que hicimos para obtener la tabla en la Observaci\'on \ref{obs:key-1}, es claro que, la partida que finaliza en una tranca con mayor puntaje posible (entre todas las posibles) para el equipo ganador, es una tranca m\'inima. Por tanto, desde la Proposici\'on \ref{prop:exam} junto con el siguiente resultado, se obtiene una prueba del Teorema \ref{teo:107}.  

\begin{proposicion}\label{prop:108}
No existe una tranca m\'inima, en la cual el equipo ganador obtenga 108 puntos.
\end{proposicion}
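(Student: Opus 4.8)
The plan is to argue by contradiction, assuming some tranca mínima awards the winning team $108$ points, and to exploit the rule that a player must place a tile whenever an end matches one of its numbers. First I would invoke the counting of Observación~\ref{obs:key-1}: equality in $108=168-42-18$ makes the configuration rigid. The losing team plays exactly one tile (necessarily a connector) and keeps thirteen tiles summing to $108$, while the winning team keeps the five cheapest tiles $\{[1,1],[1,2],[1,3],[2,2],Z\}$ with $Z\in\{[1,4],[2,3]\}$ (total $18$) and plays the remaining nine, namely all seven zeros together with two connectors. Hence the winning team holds every zero and the losing team holds none, so each losing player lacks $0$; by Proposición~\ref{prop:sin4noexiste} a player lacks at most three numbers, so each losing player lacks at most two \emph{nonzero} numbers.

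Next I would pin down which numbers the losing players are forced to lack. Let $c_1,c_2$ be the two connectors the winning team plays; together they carry four distinct pips. For any pip $k$ on $c_1$ or $c_2$, the only board tiles bearing $k$ are $[0,k]$ and that connector, both played by the winning team; therefore $k$ can sit at an end only during an interval delimited by two winning moves, and such an interval always contains losing turns. Since the losing team's single move is a connector whose two pips are disjoint from those four pips, a losing player who meets such an end cannot cover it, so any \emph{pass} of this kind is a pass by $k$, and by Observación~\ref{obs:nolleva} that player lacks $k$. The delicate point—and the main obstacle—is that the one losing move may fall on the unique losing turn of some interval and "absorb" it; one must verify, walking through the board shapes $\mathbf{T1}$–$\mathbf{T4}$ of Observación~\ref{obs:tableros}, that this can spoil at most one pip, and in particular never the two large pips of $\{4,5,6\}$ simultaneously, so that those two pips carried by $c_1,c_2$ remain genuinely forced.

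With the forcing in hand the contradiction is short. In the case $Z=[2,3]$ the three connectors pair $\{1,2,3\}$ with $\{4,5,6\}$, and the two large pips $k_1,k_2$ borne by $c_1,c_2$ are both forced; the tile $[k_1,k_2]$ is neither a connector (connectors are small--large) nor held by the winning team (whose kept tiles use only $\{1,2,3\}$), so it lies in a losing hand. If $k_1$ and $k_2$ are lacked by \emph{different} losing players, then $[k_1,k_2]$, bearing both, can belong to neither—impossible. If \emph{one} player lacks both $k_1,k_2$, that player's seven tiles must avoid $\{k_1,k_2\}$, whereas a direct count shows only five losing tiles ($[3,3],[k_0,k_0],[1,k_0],[2,k_0],[3,k_0]$, where $k_0$ is the third large pip) do so—again impossible. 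The case $Z=[1,4]$ runs along the same lines with the pair omitted by the winning hand being $\{5,6\}$: the tile $[5,6]$ is again forced into a losing hand (were $5$–$6$ a connector, $1$ could not be matched, since $1$–$2,1$–$3,1$–$4$ are all unavailable), and only four losing tiles avoid $\{5,6\}$, so both horns of the dichotomy close. In every case the standing assumption collapses, so no tranca mínima reaches $108$; combined with Proposición~\ref{prop:exam} this gives that $107$ is the maximum, establishing the statement.
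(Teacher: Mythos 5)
Your overall skeleton (rigidity of the $108$ configuration, the two cases for $Z$, and a final contradiction by counting where the tile $[k_1,k_2]$, respectively $[5,6]$, could live) is attractive and genuinely different from the paper's ending, which instead forces the losing team into a second play. But there are two concrete gaps that the proposal does not close. The first is the parenthetical claim that the losing team's unique tile is \emph{necessarily a connector}. This is never argued, and everything downstream leans on it: it is what gives you that the winning team plays all seven zero tiles, hence that both losing players lack $0$ from the start, hence that each lacks at most two nonzero pips, and it is also what makes the tile counts in the second horn of each dichotomy come out below seven. A losing team whose single play is a tile $[0,m]$ is perfectly consistent with the bookkeeping of Observaci\'on~\ref{obs:key-1}, and discarding that case costs real work: in the paper this very configuration arises inside the Afirmaci\'on of the proof (a losing player passing by $5$ forces the unique losing play to be a zero tile), and it is eliminated there by a separate argument involving the tile $[6,a]$.

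The second gap is the forcing step, which you yourself flag as delicate but leave unproven, and which moreover you state in a form too weak to finish the proof. Your dichotomies need that \emph{each} of the two designated pips ($k_1$ and $k_2$, respectively $5$ and $6$) is lacked by some losing player. What you propose to verify, namely that the single losing move can spoil at most one interval and never the two large pips simultaneously, does not give this: if the one spoiled interval is that of $k_2$, then no pass by $k_2$ ever occurs, $k_2$ need not be lacked by anyone, and neither horn of the dichotomy applies. Worse, in the case $Z=[1,4]$ the pips $5$ and $6$ need not both sit on the winner's connectors at all: the connectors can be $[1,5]$, $[2,6]$, $[3,4]$ with the losing team playing $[1,5]$, so the exposure interval of $5$ is delimited by a \emph{losing} move rather than by two winning moves, and no pass by $5$ is forced. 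In that configuration $[5,6]$ simply lies in the hand of the loser who played $[1,5]$, and your counting produces no contradiction. So, as written, the proof does not go through; repairing it requires either proving the strong form of the forcing claim or adding case analysis of the kind the paper performs (its Afirmaci\'on that no losing player passes by $5$ or $6$, followed by the two forced extra plays).
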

\begin{proof}
	La demostraci\'on la realizaremos por el absurdo. Supongamos que dicha partida existe y llegaremos a una contradicci\'on. En efecto, si tal partida existiera, entonces las siguientes afirmaciones se cumplen:
	\begin{enumerate}
		\item[(a)] El equipo perdedor juega una \'unica ficha en la partida.
		\item[(b)] La ficha $[5,6]$ no se juega en dicha partida. 
	\end{enumerate}
	Desde Proposici\'on \ref{prop:perd-jugo-1} junto con el hecho de que, las 12 fichas con m\'as puntos del domin\'o, suman 106 puntos, obtenemos que el item (a) es verdadero. Por otra parte, el equipo ganador al final de la partida, tendr\'a en sus manos las siguientes 5 fichas (divididos en dos casos), que son las de menor puntaje, dentro de las 18 fichas que quedan en las manos de todos los jugadores al finalizar la partida en cuesti\'on (ver Observaci\'on \ref{obs:key-1}):
	\begin{enumerate}
		\item [] {\bf Caso 1:} $[1,1], \ [1,2], \ [1,3], \ [2,2] $ y $[2,3]$
		\item [] {\bf Caso 2:} $[1,1], \ [1,2], \ [1,3] , \ [2,2] $ y $[1,4]$
	\end{enumerate}
	Ahora, si la ficha $[5,6]$ estuviese en la partida, entonces los otros dos conectores de la partida ser\'ian: $[1,2]$ y $[3,4]$ \'o $[1,3]$ y $[2,4]$ \'o $[1,4]$ y $[2,3]$. Pero, ninguno de estos casos son posibles, dado que uno de estos conectores, est\'a en las manos de un jugador del equipo ganador al finalizar la partida (ver casos 1 y 2, señalados previamente). Por ende, el item (b) se cumple. 
	
	Por otra parte, desde el item (a) obtenemos que el equipo perdedor suma, al menos, un total de 7 pases en toda la partida. Por un argumento similar al señalado en la Observaci\'on \ref{obs:tableros} junto con la demostraci\'on del Lema \ref{lama:pasapor2}, el equipo perdedor tiene, al menos, cuatro pases que no son exclusivamente por el 0. Por tanto, existen $j_1, \ j_2, \ j_3$ y $j_4$ n\'umeros, distintos entre s\'i, enteros del 1 al 6, tales que alg\'un jugador del equipo perdedor pasa por $j_1$ $($respectivamente, $j_2$, $j_3$ y $j_4$$)$ y, por tanto, dicho jugador no ten\'ia fichas con el n\'umero $j_1$ $($respectivamente, $j_2$, $j_3$ y $j_4$$)$, al principio de la partida $($ver Observaci\'on \ref{obs:nolleva}$)$.

Para continuar con la demostraci\'on, notemos que desde el item (b) sabemos que los conectores de la partida en cuesti\'on, son de la siguiente forma:
$$[5,a], \ [6,b], \ [c,d]$$
donde $\{a,b,c,d\}=\{1,2,3,4\}$. A continuaci\'on, mostraremos lo siguiente: 

\vspace{0,3 cm}
{\bf Afirmaci\'on: }Ning\'un jugador del equipo perdedor, pasa por 5 $($respectivamente, 6$)$

En efecto, si este fuese el caso, entonces el otro jugador del equipo perdedor, tiene en sus manos, al finalizar la partida, las siguientes fichas: $$[5,b], \ [5,c], \ [5,d], \ [5,6], \ [5,5]$$
Por tanto, dicho jugador pasa por el valor $a$. Pues, en caso contrario, el jugador que pasa por 5, pasar\'ia por los 4 n\'umeros distintos $j_1$, $j_2$, $j_3$ y $j_4$. Lo cual es absurdo, en virtud de la Proposici\'on \ref{prop:sin4noexiste}. M\'as a\'un, el jugador que pasa por 5 (del equipo perdedor), tiene una ficha con 0 al iniciar la partida, dado que en caso contrario, estaríamos en las condiciones de la Proposici\'on \ref{prop:sin4noexiste} y, por tanto, tendremos una contradicci\'on. Forzosamente, en este supuesto, la única ficha que juega el equipo perdedor es la ficha que tiene un 0, que tiene el jugador (del equipo perdedor) que pasa por 5.   

\vspace{0,3 cm}

Por otra parte, notemos que la ficha $[6,a]$ no se juega y, por los casos 1 y 2, junto con lo anterior, se deduce que dicha ficha la tiene el jugador del equipo perdedor que pas\'o por 5. Entonces, cuando en la partida se jueguen los conectores $[5,a]$ y $[c,d]$ el equipo perdedor, podrá jugar una ficha m\'as (que no contenga 0), lo que contradice el item (a).  
	
	\vspace{0,3 cm}
Entonces, nuestra afirmaci\'on es verdadera. Dicha afirmaci\'on, nos dice que los jugadores del equipo perdedor tienen fichas con el n\'umero 5 y con el n\'umero 6. Por tanto, cuando en la partida se juegue los conectores $[5,a]$ y $[6,b]$ el equipo perdedor podr\'a jugar, al menos, dos fichas en la partida en cuesti\'on. Pero, esto contradice el item (a). Como queríamos.
\end{proof}


\end{document}